\numberwithin{equation}{section} 
\newtheoremstyle{italic}
{5pt}
{5pt}
{\itshape}
{}
{}
{}
{.5em}
{\bfseries{\thmname{#1}~\thmnumber{#2}.}\thmnote{~\textnormal{(#3)}}}
\newtheoremstyle{upright}
{5pt}
{5pt}
{\upshape}
{}
{\bfseries}
{}
{.5em}
{\bfseries{\thmname{#1}~\thmnumber{#2}.}\thmnote{~\textnormal{(\textit{#3}\textrm{)}}}}
\theoremstyle{italic}
\newtheorem{theorem}{Theorem}[section]
\newtheorem{proposition}[theorem]{Proposition}
\theoremstyle{upright}
\newtheorem{definition}[theorem]{Definition}
\newcommand\blfootnote[1]{%
  \begingroup%
  \renewcommand\thefootnote{}\footnote{#1}%
  \addtocounter{footnote}{-1}%
  \endgroup%
}%
\newcommand{\unit}{u}
\newcommand{\modu}[1]{\operatorname{mod}(#1)} 
\newcommand{\Nat}{\mathbb{N}} 
\newcommand{\Int}{\mathbb{Z}} 
\newcommand{\Real}{\mathbb{R}} 
\NewDocumentCommand \E { m g }{%
    \IfNoValueTF{#2}
        {\mathbb{E}[#1]}
        {\mathbb{E}_{#1}[#2]} 
    }%
\newcommand{\bld}[1]{\mathbf{#1}} 
\NewDocumentCommand \eb { m g }{%
    \IfNoValueTF{#2}
        {\bld{e}_{#1}}
        {\bld{e}^{(#1)}_{#2}} 
    }%
\NewDocumentCommand \zerob { g }{%
    \IfNoValueTF{#1}
        {\bld{0}}
        {\bld{0}^{(#1)}} 
    }%
\newcommand{\la}{\lambda} 
\newcommand{\al}{\alpha}
\newcommand{\be}{\beta}
\newcommand{\I}{I} 
\newcommand{\M}[1]{M^{(#1)}} 
\newcommand{\Lo}{L} 
\newcommand{\ibpos}[1]{\bld{i}_{\scriptscriptstyle +}(#1)} 
\newcommand{\ipos}[1]{i_{\scriptscriptstyle +}(#1)} 
\newcommand{\ibneg}[1]{\bld{i}_{\scriptscriptstyle -}(#1)} 
\newcommand{\ineg}[1]{i_{\scriptscriptstyle -}(#1)} 
\newcommand{\Func}[1]{\Psi(#1)} 
\newcommand{\funcparam}{\psi} 
\newcommand{\func}[2]{\funcparam_{\scriptscriptstyle #1}(#2)} 
\newcommand{\hb}{\bld{h}} 
\newcommand{\h}[1]{h(#1)} 
\newcommand{\ch}{\eta} 
\newcommand{\cv}{\nu} 
\newcommand{\routing}{\mathcal{R}} 
\newcommand{\servicediscipline}{\mathcal{S}} 
\newcommand{\labar}{\overline{\la}} 
\DeclareRobustCommand{\linesolid}{\raisebox{0.2ex}{\includegraphics{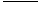}}}
\DeclareRobustCommand{\linedashed}{\raisebox{0.2ex}{\includegraphics{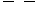}}}
\DeclareRobustCommand{\linedotted}{\raisebox{0.2ex}{\includegraphics{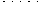}}}
\DeclareRobustCommand{\marktriangle}{\includegraphics{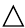}}
\DeclareRobustCommand{\markcircle}{\includegraphics{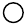}}
\DeclareRobustCommand{\marksquare}{\includegraphics{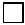}}
\title{Approximate performance analysis of generalized join the shortest queue routing}%
\author{Jori Selen\footnotemark[1] \footnotemark[2], Ivo J.B.F. Adan\footnotemark[1] \footnotemark[2] \footnotemark[3], Stella Kapodistria\footnotemark[2]}%
\begin{document}%

\maketitle%

\renewcommand{\thefootnote}{\fnsymbol{footnote}}%
\footnotetext[1]{Department of Mechanical Engineering, Eindhoven University of Technology}%
\footnotetext[2]{Department of Mathematics and Computer Science, Eindhoven University of Technology}%
\footnotetext[3]{Department of Industrial Engineering \& Innovation Sciences, Eindhoven University of Technology}%
\blfootnote{E-mail address: {\tt j.selen@tue.nl}}%
\renewcommand{\thefootnote}{\arabic{footnote}} \setcounter{footnote}{0}%

\begin{abstract}%
In this paper we propose a highly accurate approximate performance analysis of a heterogeneous server system with a processor sharing (PS) service discipline and a general job-size distribution under a \textit{generalized join the shortest queue} (GJSQ) routing protocol. The GJSQ routing protocol is a natural extension of the well-known join the shortest queue (JSQ) routing policy that takes into account the non-identical service rates in addition to the number of jobs at each server. The performance metrics that are of interest here are the equilibrium distribution and the mean and standard deviation of the number of jobs at each server. We show that the latter metrics are near-insensitive to the job-size distribution using simulation experiments. By applying a \textit{single queue approximation} (SQA) we model each server as a single server queue with a state-dependent arrival process, independent of other servers in the system, and derive the distribution of the number of jobs at the server. These state-dependent arrival rates are intended to capture the inherent correlation between servers in the original system and behave in a rather atypical way.
\end{abstract}%


\section{Introduction}%
\label{sec:introduction}%
%


\subsection{Motivation}%
\label{subsec:motivation}%

This work is motivated by web server farms. Server farms have gained popularity for providing scalable and reliable computing and web services. Most commonly the objective in analyzing such a system lies in the determination of an optimal or near-optimal load balancing routing protocol so as to maximize the performance of the system, see, e.g., \cite{HeterogeneousPSFarms_LoadBalancing_Altman2011,Harchol-Balter2013,HeterogeneousServerFarms_SizeStateAwareRouting_Hyytia2012}, where the performance of interest is usually the mean response time for an arbitrary job. In this paper the objective is to report some interesting properties of the arrival flow to each server and suggest an approximation approach for the GJSQ routing protocol. We consider farms with heterogeneous servers, which is motivated by the different hardware and the wide variety of computing capacities regarding processing power and memory access performance seen in practice in server farms \cite{Ortiz}. We assume that service requests arrive to the system according to a Poisson process. Upon arrival, a front-end dispatcher routes the request to one of the servers. After the request has been routed to the server, we assume that it cannot balk or jokey. All requests routed to a server are sharing the provided service (think of bandwidth, CPU, or RAM). We assume a PS service discipline at each server since it closely approximates the scheduling policies \cite{HB2003,SLA} employed by most commodity operating systems (e.g., Linux CPU time-sharing) and is a popular policy in computing centers (e.g., Cisco Local Director, IBM Network Dispatcher and Microsoft Sharepoint, see \cite{Cardellini2002} for a survey).

In \cite{JSQ_WebServerFarms_Gupta2007} the authors consider a server farm consisting of homogeneous servers, where upon arrival jobs are routed according to the JSQ routing protocol. This protocol in case of homogeneous servers, due to the PS service discipline, is performing near-optimal in terms of the mean response time. However, as indicated by Whitt in \cite{Whitt1986}, the JSQ policy is far from optimal in case of heterogeneous servers. In \cite{Banawan1989} the authors comment on the performance of various systems under different routing protocols and conclude that the shortest expected delay (SED) routing protocol is near-optimal in terms of mean response time. The SED policy is a policy that routes jobs upon arrival to the queue promising the minimum expected delay (which also includes the processing time). In case of exponential job-size distributions, the GJSQ and SED routing protocols are identical and in case of homogeneous servers GJSQ and JSQ are the same. However, in case of general job-size distributions and heterogeneous servers we assume that the only available information are the service rates and the number of jobs at each server, i.e.~we do not keep track of residual processing times.

Due to the complexity and the various challenges that the model at hand presents, we restrict our analysis to the case of two heterogeneous servers with a general job-size distribution under the GJSQ routing protocol. From here onwards we refer to this model as the $M/G(1,s)/2/GJSQ/PS$ system, abbreviated as the GJSQ model, where $G$ is the job-size distribution and 1 and $s$ are the service rates at servers 1 and 2, respectively. The approach described in this paper can be seen as a first stepping stone towards the analysis of heterogeneous server farms with PS servers; a very broad area, full of interesting problems. Moreover, the ideas presented here extend the work of Gupta et al.~\cite{JSQ_WebServerFarms_Gupta2007} on the analysis of the JSQ routing for homogeneous web server farms.


\subsection{Related work}%
\label{subsec:related_work}%

To the best of our knowledge there is no previous mathematical analysis of the GJSQ system. In \cite{SED_Selen2015}, Selen et al. derive the joint equilibrium distribution of the number of jobs at each server in the $M/M(1,s)/2/SED/FCFS$ model. They prove that this distribution can be expressed as an infinite series of product forms using the compensation approach. The benefit of that approach is that it produces, by truncating the series expression, numerical results with an a priori set accuracy level. Unfortunately, the compensation approach is not appropriate (in its current setting) for multiple servers, nor for general job-size distributions. Before \cite{SED_Selen2015}, very little was known regarding the mathematical analysis of the SED policy. In \cite{Lui1995}, the authors suggest two models that act as upper and lower bounds to the SED system. However, they do not provide closed form expressions for the equilibrium distribution of these two bounding models, but only an algorithmic approach based on matrix analytic methods. Furthermore, in \cite{Foschini1978,Laws1992}, the authors show that the SED routing policy is asymptotically optimal in terms of the mean response time and results in complete resource pooling in the heavy traffic limit. This heavy traffic limit result may be used in a similar manner as in \cite{Nelson1989}. However, after a few numerical experiments, we concluded that this approximation in our case results in poor estimates and for this reason we did not proceed in this direction. On the contrary, the approach developed by Gupta et al. \cite{JSQ_WebServerFarms_Gupta2007} on approximating the distribution of the number of jobs at each server, as we show in this paper, is appropriate for the GJSQ setting with heterogeneous servers. More concretely, in \cite{JSQ_WebServerFarms_Gupta2007}, the authors develop the SQA method that accurately determines the distribution of the number of jobs at each server by modeling each queue as an $M_n/M/1/PS$ system with state-dependent arrival rates. These state-dependent arrival rates are referred to as the \textit{conditional arrival rates} and are constructed in such a way that they capture the inherent correlated behavior of the complete server farm.


\subsection{Contributions}%
\label{subsec:contributions}%

We believe that we provide the first approximate analysis of the equilibrium distribution and moments of the number of jobs at each server in the GJSQ system (and by Little's law also the mean response time for an arbitrary job). Moreover, the approximation is highly accurate: we encounter a maximum relative difference between the approximation and simulations of 2.2\%. In deriving these approximations, we provide three key contributions:
\begin{enumerate}[label = \arabic*.]%
\item The mean and standard deviation of the number of jobs at each server and the conditional arrival rates are near-insensitive to the job-size distribution. This allows us to study the more tractable model with an exponential job-size distribution.
\item In case of an exponential job-size distribution, the SQA method yields the same equilibrium distribution for the number of jobs at each server as in the original GJSQ model.
\item For the application of the SQA method we present an approach for the derivation of the conditional arrival rates. In particular, we show that the conditional arrival rates, say $\la_i(n), ~ i = 1,2, ~ n \in \Nat_0$, to server 1 satisfy
    \begin{equation}%
    \la_1(n) \to \rho^{1 + s} \textup{ as } n \to \infty,
    \end{equation}%
    where $\rho$ is the load on the system, see Section~\ref{sec:model_description}, and the conditional rates to server 2 for large $n$ oscillate between $s$ different points. Note that the former result is similar to the result obtained in \cite{JSQ_WebServerFarms_Gupta2007} for the case $s = 1$, however the latter result is very atypical and is discussed in greater detail in Section~\ref{subsec:arrival_rates}.
\end{enumerate}%
%


\subsection{Outline}%
\label{subsec:outline}%

The rest of the paper is organized as follows. In Section~\ref{sec:model_description} we give a detailed model description and formally define and investigate the time-average and conditional arrival rates. Section~\ref{sec:insensitivity_results} is devoted to showing that the performance metrics of interest are near-insensitive to the job-size distribution. We describe the SQA and determine the conditional arrival rates in Section~\ref{sec:single_queue_approximation}. The approximations are evaluated in Section~\ref{sec:evaluating_approximation}. In Section~\ref{sec:conclusion} we present some conclusions.


\section{Model description}%
\label{sec:model_description}%
%


\subsection{Heterogeneous servers}%
\label{subsec:heterogeneous_servers}%

We consider a system of two heterogeneous servers and a single dispatcher. The servers employ a PS service discipline and can have different service rates, i.e.~server 1 has service rate 1 and server 2 has service rate $s$. Jobs arrive to the dispatcher according to a Poisson process with rate $\la$ and are routed immediately to one of the servers. Jobs cannot switch servers after being routed. We detail the routing policy in Section~\ref{subsec:routing_policy}. The size of a job is drawn from a general distribution $G$. Without loss of generality we assume that the mean job size is 1. Note that, for example, the (residual) processing time of a (residual) size $G$ job that runs on server 2 that is currently serving $q_2$ jobs is given by $G q_2/s$.

In what follows we assume that $s$ is a positive integer number. In the general case $s \in \Real_+$ we can bound the corresponding system by two systems with service rates given by the closest two integers to $s$.


\subsection{Routing policy}%
\label{subsec:routing_policy}%

The routing policy employed by the dispatcher is a state-aware policy, i.e.~the dispatcher is aware of the number of jobs at each server just before an arrival instant, $q_1$ and $q_2$, and the service rates. The GJSQ routing policy routes an arriving job to the server with the smallest index $(q_i + 1) / s_i$, where $s_i$ is the service rate at server $i$. In case of a tie, the job is randomly routed to one of the servers. These indexes may be interpreted as an estimate of the expected processing time for the arriving job, made by the dispatcher who is unaware of the job-size distribution and the remaining processing times of the jobs currently in service, and furthermore ignores future arrivals.

Under this routing policy, we define the load on this system as
\begin{equation}%
\rho := \la / (1 + s).
\end{equation}%
Throughout the rest of this paper we assume that $\rho < 1$.

Although not necessarily optimal, GJSQ routing outperforms JSQ routing when servers are non-identical. GJSQ routing attempts to balance the load on the servers by taking into account the different service rates in addition to the information on the current number of jobs at each server. In Figure~\ref{fig:time-average_arrival_rate} we show that the long-term fraction of jobs routed to the two servers is a function of the load $\rho$. In light traffic GJSQ assigns all jobs to the fast server and in heavy traffic the load is divided proportionally according to the service rates. This is in contrast with JSQ routing, which assigns a long-term fraction of the jobs to server 1 that decreases from 1/2 to $1/(1 + s)$ for increasing load $\rho$ (verified through simulation).

\begin{figure}%
\centering%
\includegraphics{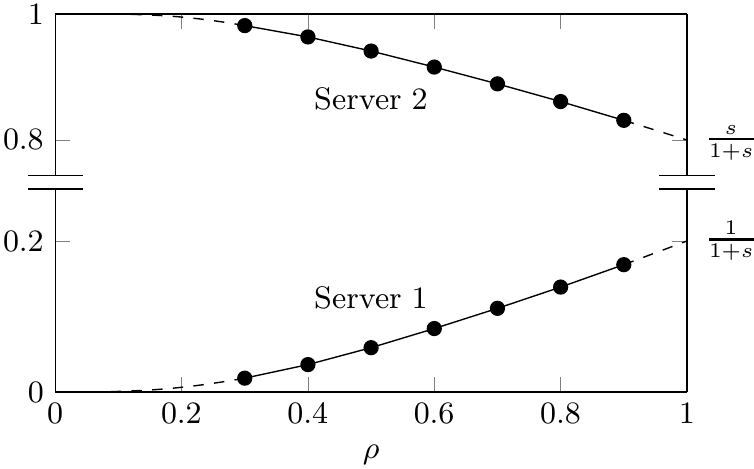}
\caption{Simulated long-term fraction of jobs routed to server $i$ as a function of the load $\rho$, where $s = 4$ and the job-size distribution is exponential. Dashed lines represent expected behavior.}%
\label{fig:time-average_arrival_rate}%
\end{figure}%


\subsection{Arrival rates}%
\label{subsec:arrival_rates}%

We briefly introduce two important concepts related to the (time-average) arrival rates to each server. These concepts will be used throughout the paper.

\begin{definition}\label{def:time-average_arrival_rates}%
In the GJSQ model, the \textit{time-average arrival rate} to server $i$ is defined as
\begin{equation}%
\labar_i := \lim_{t \to \infty} \frac{A_i(t)}{t},
\end{equation}%
where $A_i(t)$ is the number of arrivals at server $i$ during the time interval $[0,t]$.
\end{definition}%

\begin{definition}\label{def:conditional_arrival_rates}%
In the GJSQ model, the \textit{conditional arrival rate} to server $i$, given that server $i$ has $n$ jobs, is defined as
\begin{equation}%
\la_i(n) := \lim_{t \to \infty} \frac{A_{i,n}(t)}{T_{i,n}(t)}, \label{eqn:definition_conditional_arrival_rates}
\end{equation}%
where $A_{i,n}(t)$ is the number of arrivals at server $i$ during the time interval $[0,t]$ that see $n$ jobs at server $i$ on arrival (excluding themselves), and $T_{i,n}(t)$ is the total time spent by server $i$ with $n$ jobs during the time interval $[0,t]$.
\end{definition}%

The two definitions above are related. Assuming it exists, let $\pi_i(n)$ be the equilibrium probability that there are $n$ jobs at server $i$, then $\labar_i = \sum_{n = 0}^\infty \la_i(n) \pi_i(n)$.

Figure~\ref{fig:interesting_conditional_arrival_rates_server_2} depicts the conditional arrival rates to server 2 for varying $s$. Intuitively it makes sense that if a server has many jobs, the other server will probably have few jobs and thus it is less likely that the dispatcher routes the job to that server. However, what we see here is a peculiar repeating pattern that has $s$ different points and does not align with this intuition. We see that if server 2 has a multiple of $s$ jobs (or one less), fewer jobs are routed to server 2. This pattern is difficult to explain, but it is definitely related to the probability that server 1 has a lower index than server 2, given that server 2 currently has $n$ jobs. We expect and indeed verify that this probability also follows such a repeating pattern. Additionally, states in server 2 are somewhat similar if they differ by a multiple of $s$ jobs, which can be derived from the equilibrium distribution in \cite{SED_Selen2015}.

\begin{figure}%
\centering%
\subfloat[$s = 3, ~ \rho = 0.7, ~ \la = 2.8$]{%
\includegraphics[scale = 0.95]{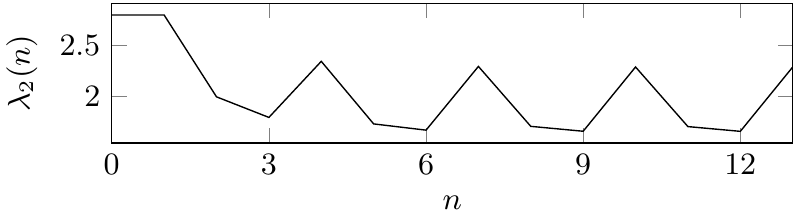}
}%
\subfloat[$s = 4, ~ \rho = 0.7, ~ \la = 3.5$]{%
\includegraphics[scale = 0.95]{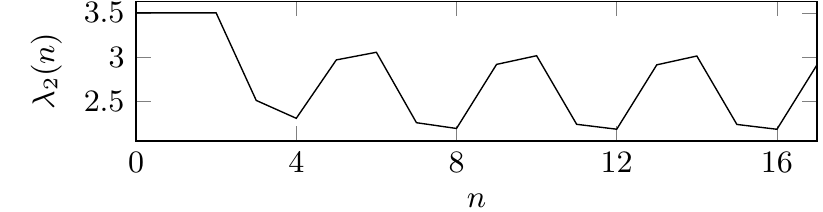}
}%
\caption{The conditional arrival rates to server 2 oscillate between $s$ points.}%
\label{fig:interesting_conditional_arrival_rates_server_2}%
\end{figure}%


\section{Near-insensitivity}%
\label{sec:insensitivity_results}%

In \cite{JSQ_WebServerFarms_Gupta2007} the authors establish a near-sensitivity property in the setting of a homogeneous server farm with JSQ routing. In particular, the first and second moment of the number of jobs at server $i$, $Q_i$, and the conditional arrival rates are near-insensitive to the job-size distribution. The near-insensitivity of these two metrics seems related to the insensitivity of the equilibrium distribution to the job-size distribution in PS servers, see, e.g., \cite[Theorem~4.2]{JSQ_WebServerFarms_Gupta2007}; and the fact that the routing policy only uses the number of jobs at each server when making a decision, as opposed to, e.g., using residual processing times. The GJSQ routing decisions are based on the dynamically changing number of jobs at each server as well as the service rates. Indeed, one expects the near-insensitivity properties to extend also to the case of heterogeneous servers and GJSQ routing. Establishing this near-insensitivity property is important, since it allows us to limit our attention to the more tractable GJSQ system with an exponential job-size distribution.


\subsection{Simulation settings}%
\label{subsec:simulation_settings}%

To support our claims, we simulate the GJSQ model. A simulation consists of $2 \cdot 10^6$ job departures and each simulation is repeated 50 times. Statistics are only computed for departed jobs, i.e.~data of jobs that are still in service at the end of the simulation are discarded. In Table~\ref{tbl:job-size_distributions} we list the four job-size distributions considered in this paper.

\begin{table}%
\centering%
\begin{tabular}{*{5}{l}}%
Name & Distribution & Support & Variance \\
\hline \hline
uni & Uniform & $[0,2]$& $1/3$ \\
exp & Exponential & $[0,\infty)$ & 1 \\
weib & Weibull & $(0,\infty)$ & 5 \\
logn & Log-normal & $(0,\infty)$  & 10
\end{tabular}%
\caption{Job-size distributions used in simulations.}%
\label{tbl:job-size_distributions}%
\end{table}%


\subsection{Near-insensitivity results}%
\label{subsec:near-insensitivity_results}%

In Table~\ref{tbl:simulation_2servers_mean_queue_length} we show simulated statistics on the mean and standard deviation $\sigma(\cdot)$ of $Q_i$ for the GJSQ model with various job-size distributions. For the settings considered in Table~\ref{tbl:simulation_2servers_mean_queue_length}, the mean number of jobs at server $i$ deviates by no more than 3.6\% from the exponential case, while the standard deviation deviates by at most 4.4\%. The largest deviations from the exponential case occur for the log-normal job-size distribution. This is as expected, since this job-size distribution has a variance that is 10 times higher than the variance of the exponential job-size distribution. Although the results are not as strong as those shown in \cite[Figure~3]{JSQ_WebServerFarms_Gupta2007}, we conclude that the more volatile environment of heterogeneous servers and GJSQ routing also has the near-insensitivity property for $\E{Q_i}$ and $\sigma(Q_i)$. Moreover, the performance in terms of the mean response time is also near-insensitive to the job-size distributions by Little's law.

\begin{table}%
\centering%
\footnotesize%
\begin{tabular}{ccc|cccc|cr}%
    &        &        & \multicolumn{4}{c}{Job-size distribution} & \multicolumn{2}{c}{SQA} \\
$s$ & $\rho$ & Metric & uni & exp & weib & logn & Value & Diff. \\
\hline \hline
2 & 0.7 & $\E{Q_1}$     & 0.9139 (0.0030) & 0.9232 (0.0030) & 0.9223 (0.0029) & 0.9361 (0.0038) & 0.9077 & 1.7\% \\
  &     & $\sigma(Q_1)$ & 1.0404 (0.0049) & 1.0505 (0.0050) & 1.0560 (0.0056) & 1.0704 (0.0067) & 1.0462 & 0.4\% \\
  &     & $\E{Q_2}$     & 2.0111 (0.0059) & 2.0289 (0.0061) & 2.0222 (0.0057) & 2.0519 (0.0074) & 2.0329 & $-$0.2\% \\
  &     & $\sigma(Q_2)$ & 2.0302 (0.0099) & 2.0465 (0.0106) & 2.0506 (0.0114) & 2.0813 (0.0141) & 2.0484 & $-$0.1\% \\
\cline{2-9}
  & 0.9 & $\E{Q_1}$     & 3.2244 (0.0336) & 3.2797 (0.0336) & 3.2316 (0.0298) & 3.2396 (0.0266) & 3.2188 & 1.9\% \\
  &     & $\sigma(Q_1)$ & 3.2208 (0.0590) & 3.2716 (0.0723) & 3.2186 (0.0575) & 3.2002 (0.0505) & 3.2161 & 1.7\% \\
  &     & $\E{Q_2}$     & 6.6841 (0.0676) & 6.7915 (0.0674) & 6.6834 (0.0587) & 6.6988 (0.0524) & 6.6424 & 2.2\% \\
  &     & $\sigma(Q_2)$ & 6.4289 (0.1185) & 6.5288 (0.1453) & 6.4186 (0.1141) & 6.3828 (0.1016) & 6.4091 & 1.9\% \\
\hline \hline
4 & 0.7 & $\E{Q_1}$     & 0.4688 (0.0016) & 0.4747 (0.0017) & 0.4705 (0.0017) & 0.4667 (0.0022) & 0.4741 & 0.1\% \\
  &     & $\sigma(Q_1)$ & 0.6685 (0.0024) & 0.6730 (0.0026) & 0.6700 (0.0029) & 0.6652 (0.0031) & 0.6655 & 1.1\% \\
  &     & $\E{Q_2}$     & 2.5386 (0.0063) & 2.5507 (0.0069) & 2.5177 (0.0070) & 2.4997 (0.0067) & 2.5866 & $-$1.4\% \\
  &     & $\sigma(Q_2)$ & 2.5082 (0.0102) & 2.5179 (0.0115) & 2.4936 (0.0133) & 2.4744 (0.0122) & 2.5457 & $-$1.1\% \\
\cline{2-9}
  & 0.9 & $\E{Q_1}$     & 1.8662 (0.0191) & 1.8793 (0.0145) & 1.8830 (0.0196) & 1.9400 (0.0223) & 1.8813 & $-$0.1\% \\
  &     & $\sigma(Q_1)$ & 1.9404 (0.0338) & 1.9539 (0.0314) & 1.9801 (0.0444) & 2.0394 (0.0408) & 1.9566 & $-$0.1\% \\
  &     & $\E{Q_2}$     & 8.2405 (0.0769) & 8.2773 (0.0597) & 8.2631 (0.0783) & 8.4863 (0.0861) & 8.3642 & $-$1.0\% \\
  &     & $\sigma(Q_2)$ & 7.6982 (0.1374) & 7.7507 (0.1264) & 7.8485 (0.1815) & 8.0912 (0.1652) & 7.7692 & $-$0.2\% \\
\end{tabular}%
\caption{Simulated mean and standard deviation of $Q_i$, for the GJSQ system with various $s$, $\rho$ and job-size distributions. Sample standard deviation is shown in parentheses. Last two columns show the value obtained by the SQA and the relative difference with respect to the exponential case.}%
\label{tbl:simulation_2servers_mean_queue_length}%
\end{table}%

Concerning the conditional arrival rates, we see in Figure~\ref{fig:conditional_arrival_rate} that the simulated values for the job-size distributions of Table~\ref{tbl:job-size_distributions} match the results of the algorithm for the exponential case \cite{SED_Selen2015}. Simulated values for states where the sample standard deviation is not too high differ by at most 5\% from the results for the exponential case. So, also the conditional arrival rates are near-insensitive to the job-size distribution.

\begin{figure}%
\centering%
\subfloat[$\rho = 0.7, ~ s = 4, ~ \la = 3.5$]{%
\includegraphics{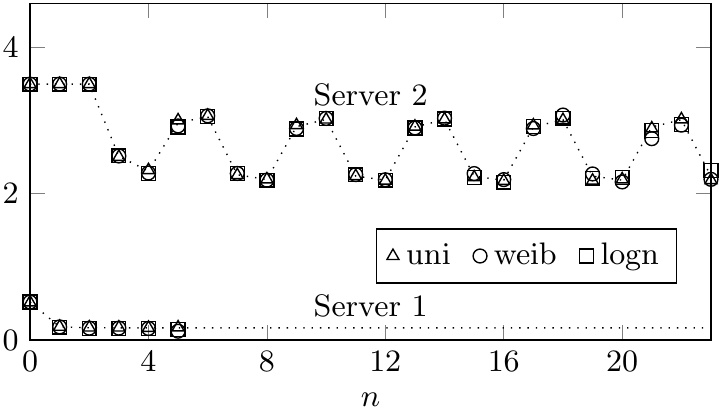}
}%
\subfloat[$\rho = 0.9, ~ s = 4, ~ \la = 4.5$]{%
\includegraphics{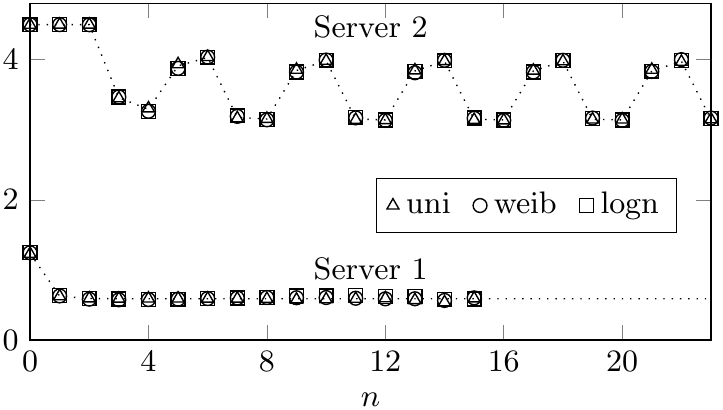}
}%
\caption{Simulated conditional arrival rates in the GJSQ system with various job-size distributions. The dotted curves represent values determined by the algorithm in \cite{SED_Selen2015} for the exponential job-size distribution.}%
\label{fig:conditional_arrival_rate}%
\end{figure}%


\section{Single queue approximation}%
\label{sec:single_queue_approximation}%

We have established near-insensitivity of $\E{Q_i}$, $\sigma(Q_i)$ and the conditional arrival rates to the job-size distributions. Thus, we may limit our attention to systems with an exponential job-size distribution. In this section we derive an approximation for the distribution of the number of jobs at each server using the SQA, which models server $i$ as an $M_n/M_i/1/PS$ queue with state-dependent arrival rates $\la_i(n)$, see also \cite[Section~3]{JSQ_WebServerFarms_Gupta2007}. SQA is exact when the job-size distribution is exponential and the routing belongs to a specific class of routing policies; the following theorem is a version of \cite[Theorem~3.1]{JSQ_WebServerFarms_Gupta2007} that is applicable to the GJSQ model.

\begin{definition}\label{def:stationary_state-aware_routing_policy}%
A \textit{stationary state-aware routing policy} is a time-stationary routing policy that only uses information about the number of jobs at the servers and the service rates at the instant of an arrival. The decisions may be made probabilistically, possibly biased in favor of certain servers.
\end{definition}%

\begin{theorem}\label{thm:SQA_exact_Markovian}
Consider the $M/M(1,s)/2/\routing/\servicediscipline$ queueing model, where $\routing$ is any stationary state-aware routing policy, e.g.~GJSQ, and $\servicediscipline$ is any stationary, size-independent, work-conserving service discipline, e.g.~PS. Consider server $i$ in this model. Then SQA with the exact conditional arrival rates $\la_i(\cdot)$ yields the same equilibrium distribution for the number of jobs at each server as in the original model.
\end{theorem}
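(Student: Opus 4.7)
The plan is to run a level-crossing argument on the marginal process $(Q_i(t))_{t \geq 0}$ of the original two-server model and show that it produces the same birth-death balance equations that govern the SQA Markov chain. Fix server $i$, write $\pi_i(n) := \Prob{Q_i = n}$ for the marginal equilibrium probability in the original model, and write $\pi_i^{\mathrm{SQA}}(n)$ for the equilibrium distribution of the associated $M_n/M/1/\servicediscipline$ queue with state-dependent arrival rate $\la_i(\cdot)$ and service rate $s_i$. The aim is $\pi_i = \pi_i^{\mathrm{SQA}}$.

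First I would observe that the SQA queue, because job sizes are exponential and $\servicediscipline$ is size-independent and work-conserving, admits a pure birth-death description for the number-of-jobs process: birth rate $\la_i(n)$ in state $n$ and total death rate $s_i$ whenever $n \geq 1$ (for PS, this is because $n$ jobs each served at rate $s_i/n$ have minimum-of-exponentials completion rate $s_i$, and an analogous argument works for any other admissible discipline). Consequently $\pi_i^{\mathrm{SQA}}$ is the unique normalized solution to the detailed balance recursion $\la_i(n)\pi_i^{\mathrm{SQA}}(n) = s_i\pi_i^{\mathrm{SQA}}(n+1)$, $n \geq 0$.

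Next I would derive the same recursion for $\pi_i$ from level crossings on the sample paths of the joint, Markovian process $(Q_1(t),Q_2(t))$. Every upcrossing of $Q_i$ from $n$ to $n+1$ is, by construction, an arrival routed to server $i$ while it holds exactly $n$ jobs, and Definition~\ref{def:conditional_arrival_rates} gives the long-run rate of such events per unit time as
\begin{equation}
\lim_{t \to \infty}\frac{A_{i,n}(t)}{t} \;=\; \lim_{t \to \infty}\frac{A_{i,n}(t)}{T_{i,n}(t)}\cdot\frac{T_{i,n}(t)}{t} \;=\; \la_i(n)\pi_i(n). \nonumber
\end{equation}
A downcrossing from $n+1$ to $n$ is a completion at server $i$ while it holds $n+1$ jobs; combining exponential job sizes with size-independence and work-conservation of $\servicediscipline$, the total completion rate at server $i$ equals $s_i$ whenever it is busy, so the long-run downcrossing rate is $s_i\pi_i(n+1)$. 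In equilibrium up- and down-crossings across each edge $n \to n+1$ balance, giving $\la_i(n)\pi_i(n) = s_i\pi_i(n+1)$ for every $n \geq 0$. This is precisely the SQA recursion, so together with the common normalization it forces $\pi_i = \pi_i^{\mathrm{SQA}}$, and the same reasoning applied to the other server closes the proof.

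The main obstacle I anticipate is justifying the downcrossing rate rigorously for an arbitrary size-independent, work-conserving $\servicediscipline$: one must argue, either via Palm calculus or by a sample-path identity involving cumulative work served, that under exponential job sizes the total instantaneous completion rate at a non-empty server $i$ equals $s_i$, independently of how server $i$ splits its capacity among the jobs present. Once this well-known insensitivity fact is invoked, together with the standard equality of up- and down-crossing rates for a stationary integer-valued process with unit jumps, the rest is routine bookkeeping.
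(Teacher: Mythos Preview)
Your argument is correct. The paper itself does not prove this theorem: it is stated without proof as the heterogeneous-server analogue of \cite[Theorem~3.1]{JSQ_WebServerFarms_Gupta2007}, and the exposition then moves directly to specifying the conditional arrival rates. Your level-crossing derivation---matching the up-crossing rate $\la_i(n)\pi_i(n)$ obtained from Definition~\ref{def:conditional_arrival_rates} against the down-crossing rate $s_i\,\pi_i(n+1)$, the latter relying on the fact that exponential job sizes under any work-conserving, size-independent discipline produce a constant total completion rate $s_i$ at a busy server---is precisely the standard argument underlying this class of results and is, in spirit, the proof in the cited reference. The only point worth making fully explicit is that the sample-path limits in Definition~\ref{def:conditional_arrival_rates} and the level-crossing identity are justified here because, under the stated assumptions, the joint queue-length process $(Q_1,Q_2)$ is an ergodic continuous-time Markov chain on $\Nat_0^2$.
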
%

It remains to specify the conditional arrival rates $\la_i(n)$ for both servers. We combine exact limiting results for $n \ge N_i$ and approximation results for $n < N_i$, where $N_1 = 3$ and $N_2 = 2s$. These choices for $N_i$ result in accurate approximations.

We note that Theorem~\ref{thm:SQA_exact_Markovian} implies that in order to determine the conditional arrival rates, we may assume a FCFS service discipline.

In Figure~\ref{fig:interesting_conditional_arrival_rates_server_2} we have seen that the conditional arrival rates $\la_i(n)$ exhibit a repeating pattern from some $n$ and onwards. We rigorously characterize this limiting repeating pattern in the next theorem.

\begin{theorem}\label{thm:limiting_conditional_arrival_rates}%
For the $M/M(1,s)/2/GJSQ/PS$ queueing model with $s \in \Nat$,
\begin{align}%
\la_1^{\textup{lim}} \coloneqq \lim_{n \to \infty} \la_1(n) &= \rho^{1 + s}, \label{eqn:limiting_conditional_arrival_rate_server_1}\\
\la_2^{\textup{lim}}(r) \coloneqq \lim_{n \to \infty} \la_2(sn + r) &= \begin{cases}%
s \frac{A(r + 1)}{A(r)}, & r = 0,1,\ldots,s - 2, \\
s \rho^{1 + s} \frac{A(0)}{A(s - 1)}, & r = s - 1,
\end{cases}\label{eqn:limiting_conditional_arrival_rate_server_2}%
\end{align}%
where
\begin{equation}%
A(r) = \sum_{i = 1}^s \ch_i \frac{\be_i}{\rho^{1 + s} - \be_i} \ipos{\rho^{1 + s},\be_i,r} + h(r) + \frac{\be_{s + 1}}{1 - \be_{s + 1}} \ineg{\rho^{1 + s},\be_{s + 1},r}, \label{eqn:A(r)}
\end{equation}%
and the variables $\be_1,\be_2,\ldots,\be_{s+1}$, $\ch_1,\ch_2,\ldots,\ch_{s}$, and the functions $h(\cdot)$, $\ipos{\cdot}$, $\ineg{\cdot}$ are defined in Appendix~\ref{app_sec:definitions}.
\end{theorem}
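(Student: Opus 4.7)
The plan is to express $\la_i(n)$ via PASTA in terms of the joint stationary distribution $\pi(q_1,q_2)$, substitute the compensation-approach series for $\pi$ obtained in \cite{SED_Selen2015} (which applies verbatim since under exponential job sizes the joint queue-length dynamics of $M/M(1,s)/2/GJSQ/PS$ and $M/M(1,s)/2/SED/FCFS$ coincide), and then extract the large-$n$ asymptotics of the resulting sums.

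First I would make the GJSQ routing thresholds explicit: comparing the indices $(q_1+1)/1$ and $(q_2+1)/s$ shows that an arrival observing $(q_1,q_2)$ is routed to server~1 iff $q_2 \geq sq_1 + s$, to server~2 iff $q_2 \leq sq_1 + s - 2$, and with probability $1/2$ to each server in the tie case $q_2 = sq_1 + s - 1$. Combined with PASTA this gives
\begin{equation*}
\la_1(n) = \la\,\frac{\sum_{m\geq sn+s}\pi(n,m) + \tfrac{1}{2}\pi(n,sn+s-1)}{\sum_{m\geq 0}\pi(n,m)},
\end{equation*}
together with the analogous identity for $\la_2(n)$ in which, writing $n = sk+r$ with $0\leq r\leq s-1$, the threshold on $q_1$ equals $k$ for $r\leq s-2$ but jumps to $k+1$ (with a $\tfrac{1}{2}$-weighted contribution at $q_1=k$) for $r = s-1$. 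This modular shift of the threshold is precisely the mechanism behind the $s$-point oscillation in \eqref{eqn:limiting_conditional_arrival_rate_server_2}.

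Second, I would substitute the compensation representation of $\pi(q_1,q_2)$ from \cite{SED_Selen2015}: a convergent infinite sum of product-form terms $\alpha^{q_1}\beta^{q_2}$ generated iteratively through a pair of compensation equations, together with boundary contributions encoded by $h(\cdot)$, $\ipos{\cdot}$, and $\ineg{\cdot}$. The inner geometric sums over $m$ in the numerator and denominator of $\la_1(n)$ can then be computed mode by mode, and as $n\to\infty$ only the term with the largest $|\alpha|$ survives. The construction of the compensation chain in \cite{SED_Selen2015} identifies this dominant root as $\rho^{1+s}$, and a cancellation of the proportional coefficients between numerator and denominator produces \eqref{eqn:limiting_conditional_arrival_rate_server_1}.

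For server~2 I would carry out the same reduction while tracking the $r$-dependence. Summing geometric series in $q_1 \geq k + c(r)$ with $c(r)\in\{0,1\}$ and factoring $(\alpha^{1/s}\beta)^{sk}\beta^{r}$ out of each contribution, keeping only the dominant mode collapses the numerator to a linear combination indexed by $r$ that, once the contributions of $h$ and $\ineg{\cdot}$ are assembled, is exactly $A(r)$ as in \eqref{eqn:A(r)}; the denominator reduces in the same way to a quantity proportional to $s\,A(r)$. This gives $\la_2(sk+r)\to s\,A(r+1)/A(r)$ for $r\leq s-2$, while the boundary case $r = s-1$ must close the $s$-cycle consistently, and this is where the extra factor $\rho^{1+s}$ and the ratio $A(0)/A(s-1)$ appear as a direct consequence of the tie at $q_1=k$ and the half-integer shift of the threshold. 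The principal obstacle is the asymptotic bookkeeping inside the compensation series: verifying that every non-dominant product-form mode decays strictly faster in $k$, and that the subleading boundary terms assemble exactly into the claimed combination $A(r)$. I expect this step, rather than the PASTA reduction or the routing analysis, to concentrate the technical work.
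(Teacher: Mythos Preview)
Your plan is sound and would yield the theorem, but the paper takes a shorter route that bypasses the PASTA computation entirely. Rather than writing $\la_i(n)$ as $\la$ times a conditional routing probability and then evaluating the restricted sum over the routing region, the paper invokes the exactness of SQA (Theorem~\ref{thm:SQA_exact_Markovian}): since the marginal queue at server $i$ is a birth--death chain in equilibrium, local balance gives immediately
\[
\la_1(n)=\frac{\pi_1(n+1)}{\pi_1(n)},\qquad \la_2(n)=s\,\frac{\pi_2(n+1)}{\pi_2(n)}.
\]
Everything then reduces to the large-$n$ asymptotics of the \emph{marginals} $\pi_1(n)$ and $\pi_2(sn+r)$ alone; no routing thresholds or tie cases enter the analysis. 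The paper expresses each marginal in the transformed coordinates $(m,n,r)$ with $m=\min(q_1,\lfloor q_2/s\rfloor)$, $n=\lfloor q_2/s\rfloor-q_1$, $r=q_2\bmod s$ (this is the native parameterization of the compensation series in \cite{SED_Selen2015}; the terms are $\alpha^{m}\beta^{n}$ times an $r$-dependent vector, not $\alpha^{q_1}\beta^{q_2}$ as your description suggests), splits the outer sum into the pieces $n<0$, $n=0$, $n>0$, divides by $\alpha_{0,1}^{n}=\rho^{(1+s)n}$, and passes to the limit using dominated convergence together with the decay bounds $|\alpha_{l,i}|>|\beta_{l,d(i)+j}|>|\alpha_{l+1,d(i)+j}|$ from \cite{SED_Selen2015}. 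The limit of $\pi_2(sn+r)/\alpha_{0,1}^{n}$ is exactly $A(r)$, which is why that quantity appears so cleanly in the ratio; for server~1 the $r$-dependence disappears after summing, so the ratio collapses to $\alpha_{0,1}$.

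Your approach is equivalent---indeed your numerator $\la\sum_{m\ge sn+s}\pi(n,m)+\tfrac{\la}{2}\pi(n,sn+s-1)$ \emph{equals} $\pi_1(n+1)$ by precisely the level-crossing identity the paper exploits---but it forces you to track routing regions and tie states through the asymptotic bookkeeping, and to recognize after the fact that the restricted sums reassemble into marginals. The paper's reduction to ratios of marginals makes both the value $\rho^{1+s}$ for server~1 and the $s$-periodic structure for server~2 essentially automatic. Both routes rest on the same compensation-series estimates; the paper's just has less to carry.
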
%

\begin{proof}%
See Appendix~\ref{app_sec:proof_limiting_conditional_arrival_rates}.
\end{proof}%

For the rates $\la_1(n), ~ n < 3$ and $\la_2(n), ~ n < 2s$ we provide approximations that are functions of $s$ and $\rho$. For server 1 we use a multiple linear regression model to fit an approximate function for the conditional arrival rates on data obtained from the algorithm in \cite{SED_Selen2015} for $s = 1,2,3,4$ and $\rho$ from 0.3 to 0.99. Obviously, one can also use conditional arrival rates obtained by simulation for these fitting purposes. We carefully select a set of 5 independent variables for each conditional arrival rate. This leads to the following approximate conditional arrival rates for server 1:
\begin{align}%
\frac{\la_1(0)}{\rho^{1 + s}} &\approx \begin{bmatrix} s\rho & s & \frac{s}{\rho} & 1 & \frac{\rho^2}{s^2} \end{bmatrix} \begin{bmatrix} 0.669 & -1.90 & 1.23 & 1.86 & -0.192 \end{bmatrix}^T, \label{eqn:conditional_arrival_rate_server_1_n_0} \\
\frac{\la_1(1)}{\rho^{1 + s}} &\approx  \begin{bmatrix} s\rho^2 & 1 & \frac{1}{\rho} & \frac{1}{s\rho} & \rho^{1/s} \end{bmatrix} \begin{bmatrix} -0.00856 & 1.37 & -0.0578 & 0.123 & -0.254 \end{bmatrix}^T, \label{eqn:conditional_arrival_rate_server_1_n_1} \\
\frac{\la_1(2)}{\rho^{1 + s}} &\approx 1 + \begin{bmatrix} s\rho & \frac{1}{s\rho} & \frac{\rho}{s^2} & \frac{1}{s^2} & \rho^{1/s} \end{bmatrix} \frac{1}{100} \begin{bmatrix} -0.131 & -0.820 & -6.48 & 10.4 & 0.893 \end{bmatrix}^T \label{eqn:conditional_arrival_rate_server_1_n_2}
\end{align}%
with $\bld{x}^T$ the transpose of a vector $\bld{x}$. For $s = 1$, one should consider $\la_1(\cdot) = \la_2(\cdot)$ and use the approximations presented in \eqref{eqn:conditional_arrival_rate_server_1_n_0}-\eqref{eqn:conditional_arrival_rate_server_1_n_2}.

For server 2, let us note that $\la_2(n) = \la, ~ n = 0,\ldots,s - 2$ due to the GJSQ routing. Using a multiple regression model in this case is more difficult, since the number of states for which we need to obtain a fit increases with $s$. To circumvent a possibly complex fitting procedure, we establish a relation between the conditional arrival rates for the states $n = s - 1,s,\ldots,2s - 1$ and the limiting conditional arrival rates determined in Theorem~2. Namely,
\begin{align}%
\la_2(n) &\approx \Bigl( 1 + \bigl( \frac{1}{s} - \frac{\rho}{2s - 1} \bigr)  \frac{1}{2^{n - (s - 1)}} \Bigr) \la_2^{\mathrm{lim}}(n - s), \label{eqn:conditional_arrival_rate_server_2_n_s-1_until_2s-1}
\end{align}%
where for convenience $\la_2^\textup{lim}(-1) = \la_2^\textup{lim}(s - 1)$. The approximations \eqref{eqn:conditional_arrival_rate_server_1_n_0}-\eqref{eqn:conditional_arrival_rate_server_2_n_s-1_until_2s-1} behave in various limiting regimes as expected:

\begin{proposition}\label{prop:conditional_arrival_rates_limiting_regimes}\hspace*{1em}%
\begin{enumerate}[label = \textup{\arabic*.}]%
\item For $s \to \infty$, we have that $\la_1(n) \downarrow 0$ and $\la_2(n) = \la$ for all $n \in \Nat_0$. No job will join server 1, since the processing times in server 2 are instantaneous.
\item In the light-traffic regime, i.e.~$\rho \downarrow 0$, we find that $\la_1(n) \downarrow 0, ~ n \in \Nat_0$ and $\la_2(n) \downarrow 0, ~ n \ge s - 1$.
\item In the heavy-traffic regime, i.e.~$\rho \uparrow 1$, we establish that $\labar_1/\la = 1/(1 + s)$ and $\labar_2/\la = s/(1 + s)$ which is consistent with the findings in Figure~\ref{fig:time-average_arrival_rate}.
\end{enumerate}%
\end{proposition}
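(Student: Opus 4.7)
The plan is to verify each of the three regimes separately by substituting the appropriate limit into the approximation formulas \eqref{eqn:conditional_arrival_rate_server_1_n_0}--\eqref{eqn:conditional_arrival_rate_server_2_n_s-1_until_2s-1} together with the exact expressions from Theorem~\ref{thm:limiting_conditional_arrival_rates}, and in the heavy-traffic case combining these with the product-form equilibrium distribution of the resulting $M_n/M/1/PS$ queue.

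For part 1 ($s \to \infty$, $\rho < 1$ fixed), each of \eqref{eqn:conditional_arrival_rate_server_1_n_0}--\eqref{eqn:conditional_arrival_rate_server_1_n_2} has the form $\rho^{1 + s}$ times a linear combination of terms that grow at most linearly in $s$ (e.g.\ $s\rho$, $s$, $s/\rho$, $1$, $\rho^{1/s}$ and reciprocals of these); since $\rho^{1 + s} \cdot s \to 0$ exponentially, $\la_1(n) \downarrow 0$ for $n = 0,1,2$, and for $n \geq 3$ the identity $\la_1(n) = \rho^{1 + s}$ from Theorem~\ref{thm:limiting_conditional_arrival_rates} gives the same conclusion. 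For $\la_2(n)$, any fixed $n \in \Nat_0$ satisfies $n \leq s - 2$ once $s$ is large enough, and the GJSQ indices then satisfy $(n + 1)/s < 1 \leq q_1 + 1$ for every $q_1 \geq 0$, forcing every arrival onto server~2 so that $\la_2(n) = \la$.

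For part 2 ($\rho \downarrow 0$), the same factorization of \eqref{eqn:conditional_arrival_rate_server_1_n_0}--\eqref{eqn:conditional_arrival_rate_server_1_n_2} yields $\la_1(n) = \rho^{1 + s} \cdot \mathrm{O}(1/\rho) = \mathrm{O}(\rho^{s}) \downarrow 0$ for $n = 0,1,2$, while $\la_1(n) = \rho^{1 + s} \downarrow 0$ for $n \geq 3$. For $\la_2(n)$ with $n \geq s - 1$, the approximation \eqref{eqn:conditional_arrival_rate_server_2_n_s-1_until_2s-1} reduces matters to proving $\la_2^{\textup{lim}}(r) \downarrow 0$ for every $r$; the case $r = s - 1$ is immediate from the explicit $\rho^{1 + s}$ factor in \eqref{eqn:limiting_conditional_arrival_rate_server_2}, whereas for $r < s - 1$ I would use the physical upper bound $\la_2^{\textup{lim}}(r) \leq \la = \rho(1 + s) \downarrow 0$, valid because Theorem~\ref{thm:limiting_conditional_arrival_rates} furnishes the \emph{exact} limits of the true conditional rates and these cannot exceed the aggregate arrival rate. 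This sidesteps what would otherwise be the main technical obstacle, namely an explicit $\rho \downarrow 0$ asymptotic of the roots $\be_i$ and coefficients appearing in $A(\cdot)$.

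For part 3 ($\rho \uparrow 1$), the SQA produces the $M_n/M/1/PS$ equilibrium $\pi_i(n) = \pi_i(0) \prod_{k = 0}^{n - 1} \la_i(k)/\mu_i$ with $\mu_1 = 1$, $\mu_2 = s$. Theorem~\ref{thm:limiting_conditional_arrival_rates} gives $\pi_1(n + 1)/\pi_1(n) \to \rho^{1 + s}$, and the telescoping identity $\prod_{r = 0}^{s - 1} \la_2^{\textup{lim}}(r) = s^s \rho^{1 + s}$ (obtained by multiplying together the $s$ cases of \eqref{eqn:limiting_conditional_arrival_rate_server_2}) similarly yields $\pi_2(n + s)/\pi_2(n) \to \rho^{1 + s}$. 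As $\rho \uparrow 1$ this asymptotic ratio approaches $1$, so the normalizer $\pi_i(0)^{-1} = \sum_{n \geq 0} \prod_{k = 0}^{n - 1} \la_i(k)/\mu_i$ diverges and $\pi_i(0) \downarrow 0$. Steady-state flow balance $\labar_i = \mu_i(1 - \pi_i(0))$ then gives $\labar_1 \to 1$ and $\labar_2 \to s$, and dividing by $\la = \rho(1 + s) \to 1 + s$ yields the stated ratios $1/(1 + s)$ and $s/(1 + s)$.
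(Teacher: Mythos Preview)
Your argument is correct. Parts~1 and~3 follow the paper's proof essentially step for step; in part~3 you even supply the telescoping identity $\prod_{r=0}^{s-1} \la_2^{\textup{lim}}(r) = s^s \rho^{1+s}$ that makes explicit why $\pi_2(0) \downarrow 0$, which the paper leaves as ``it can be verified.'' One minor point on part~1: you fix $\rho$, whereas the paper fixes $\la$ (so that $\rho = \la/(1+s) \downarrow 0$, consistent with ``processing times in server~2 are instantaneous''); your computation goes through in either regime since $\rho^{1+s}$ times anything polynomial in $s$ still vanishes.

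The genuine difference is in part~2 for server~2. The paper (Appendix~\ref{app_sec:proof_limiting_regimes}) establishes $\la_2^{\textup{lim}}(r) \downarrow 0$ via a detailed asymptotic expansion of the $A(r)$ from Theorem~\ref{thm:limiting_conditional_arrival_rates}: it proves $A(\al,r)/\al^{r/s} \to c(r)$ as $\al = \rho^{1+s} \downarrow 0$ by invoking several lemmas from \cite{SED_Selen2015} on the light-traffic behaviour of the roots $\be_i$, the coefficients $\ch_i$, and the vectors $\hb$, $\ibpos{\cdot}$, $\ibneg{\cdot}$, and then deduces $A(r+1)/A(r) = \mathrm{O}(\al^{1/s}) \to 0$. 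You instead observe that Theorem~\ref{thm:limiting_conditional_arrival_rates} gives the \emph{exact} limits of the true conditional rates, and these---being a thinning of the ambient Poisson stream---cannot exceed $\la = \rho(1+s) \downarrow 0$. Your route is shorter and entirely self-contained; what the paper's route buys in exchange is the finer quantitative information that each ratio $A(r+1)/A(r)$ vanishes like $\al^{1/s}$.
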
%

\begin{proof}%
1. Follows straightforwardly by taking the limit $s \to \infty$ in \eqref{eqn:conditional_arrival_rate_server_1_n_0}-\eqref{eqn:conditional_arrival_rate_server_1_n_2} while taking into account that $\rho = \la/(1 + s)$. Furthermore, observe that $\la_2(n) = \la, ~ n = 0,\ldots,s - 2$, so that $\lim_{s \to \infty} \la_2(n) = \la, ~ n \in \Nat_0$.

\noindent 2. See Appendix~\ref{app_sec:proof_limiting_regimes}.

\noindent 3. From the approximate conditional arrival rates $\la_1(\cdot)$ one can derive (approximate) equilibrium probabilities $\pi_1(\cdot)$. Then, $\labar_1 = \sum_{n = 0}^\infty \la_1(n) \pi_1(n) = \sum_{n = 0}^\infty \pi_1(n+1) = 1 - \pi_1(0)$ by exploiting the balance equations. For $\rho \uparrow 1$ it can be verified that $\pi_1(0) \downarrow 0$, so that $\lim_{\rho \uparrow 1} \labar_1/\la = 1/(1 + s)$. The result for server 2 follows analogously.
\end{proof}%


\section{Evaluating the approximation}%
\label{sec:evaluating_approximation}%

We are now in a position to evaluate the proposed approximations. First, we show that the approximations for the conditional arrival rates follow closely the exact values, which were determined using the algorithm \cite{SED_Selen2015}. Second, we establish that the mean and standard deviation of the number of jobs at each server is also well approximated.

Figure~\ref{fig:comparison_conditional_arrival_rates} compares the conditional arrival rates obtained from the algorithm in \cite{SED_Selen2015} and the approximations derived in the previous section. For the cases considered in the figure, the maximum relative difference of the approximation with respect to the values determined by the algorithm is 1.5\% for $\la_1(\cdot)$ and 4.1\% for $\la_2(\cdot)$. Since both methods consider exponential job-size distributions, the difference is due to the fitting error introduced in the approximations of the conditional arrival rates in Section~\ref{sec:single_queue_approximation} and the truncation error in the algorithm in \cite{SED_Selen2015}. However, since the truncation error has been chosen to be of the order $10^{-5}$, it has little influence.

\begin{figure*}%
\centering
\subfloat[$s = 3$, server 1]{%
\includegraphics{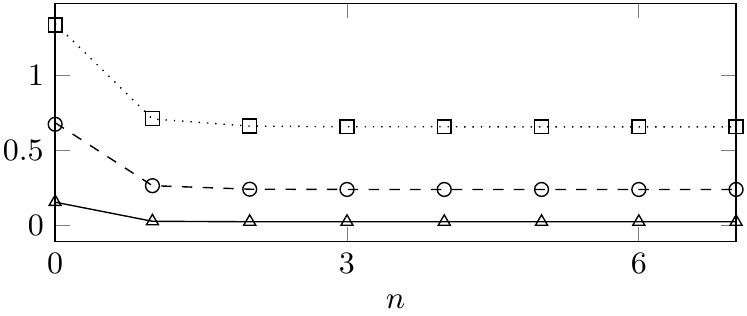}
}%
\subfloat[$s = 3$, server 2]{%
\includegraphics{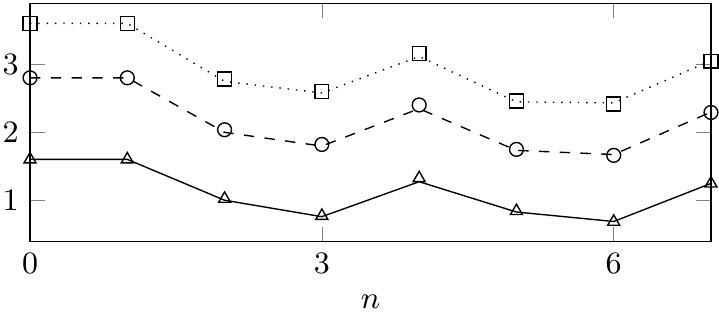}
}%
\\%
\subfloat[$s = 4$, server 1]{%
\includegraphics{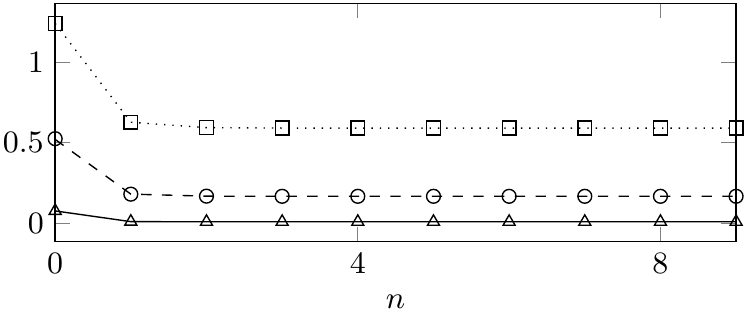}
}%
\subfloat[$s = 4$, server 2]{%
\includegraphics{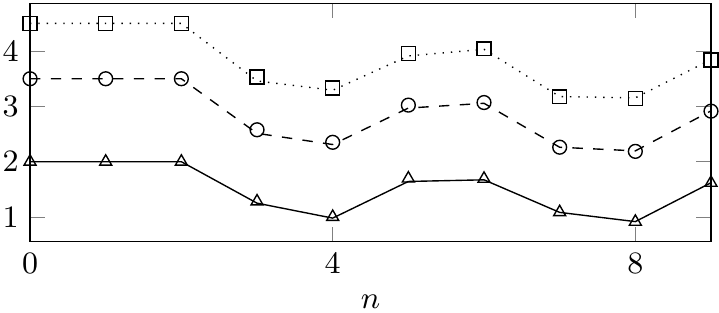}
}%
\caption{Comparison of conditional arrival rates determined by the algorithm in \cite{SED_Selen2015} (lines) and our approximations (marks) for $\rho = 0.4$ (\linesolid,\marktriangle), $\rho = 0.7$ (\linedashed,\markcircle), and $\rho = 0.9$ (\linedotted,\marksquare).}%
\label{fig:comparison_conditional_arrival_rates}
\end{figure*}%

In the two rightmost columns of Table~\ref{tbl:simulation_2servers_mean_queue_length} we provide the mean and standard deviation of the number of jobs at both servers determined using the SQA. We report highly accurate approximations that deviate less than 2.2\% from the case with an exponential job-size distribution for the listed values of $s$ and $\rho$. Although our approximations are not aimed at the case $s = 1$, we report accurate approximations also in this setting with maximum relative differences of the same order as in \cite[Section~6.1]{JSQ_WebServerFarms_Gupta2007}.


\section{Conclusion}%
\label{sec:conclusion}%

In this paper, we provide an approximate performance analysis of a queueing system consisting of two heterogeneous PS servers with service rates 1 and $s \in \Nat$, respectively, a general job-size distribution and GJSQ routing. More concretely, we derived the approximate equilibrium distribution of the number of jobs at each server using the SQA method. In order to apply SQA, we established that the GJSQ system is near-insensitive to the job-size distribution and thus we approximated the system at hand with exponentially distributed job-sizes. We then approximated the conditional arrival rates for the exponential case, by combining exact limiting results for large number of jobs and approximation results, which were obtained using a multiple linear regression model, for small number of jobs. Ultimately, the aforementioned approach resulted in approximations that are highly accurate; we reported a maximum relative difference with respect to exact or simulation results of 4.1\% for the conditional arrival rates and 2.2\% for the mean and standard deviation of the number of jobs at each server.

In this paper we set the groundwork for the analysis of server farms with heterogeneous servers under the GJSQ routing policy by analyzing the case of two servers. Of course, server farms consist of multiple servers so it is in our future plans to extend the analysis presented in this paper to more than two servers. The most difficult aspect of this task would be the derivation of the conditional arrival rates, which possibly has to rely on simulation data, since the approach in \cite{SED_Selen2015} is in its current setting restricted to two servers. In Figure~\ref{fig:conditional_arrival_rates_three_servers} we present an example of the simulated conditional arrival rates in case of three servers with service rates 1, 2 and 5. Note that the structure of the conditional arrival rates is as expected, i.e.~the number of points in the repeating pattern is directly related to the rate of service, but the exact values of these points differ from the values obtained by formulas \eqref{eqn:limiting_conditional_arrival_rate_server_1} and \eqref{eqn:limiting_conditional_arrival_rate_server_2}.

\begin{figure}%
\centering%
\includegraphics{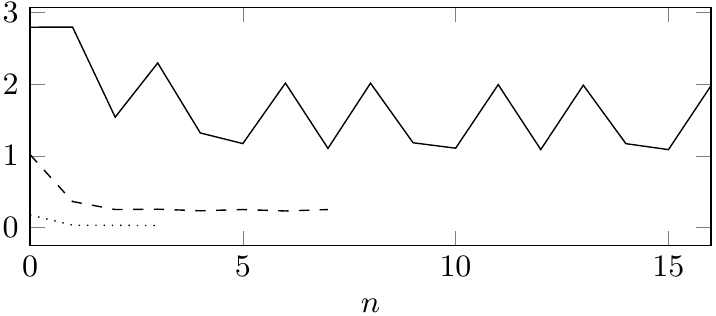}
\caption{Simulated conditional arrival rates for a system with three servers with service rates 1 (\linedotted), 2 (\linedashed), and 5 (\linesolid), with $\rho = 0.7$.}%
\label{fig:conditional_arrival_rates_three_servers}%
\end{figure}%


\subsection*{Acknowledgments}%
\label{subsec:acknowledgements}%

This work was supported by an NWO free competition grant and the NWO Gravitation Project NETWORKS.


\appendix%


\section{Definition of the variables and functions in Theorem~\ref{thm:limiting_conditional_arrival_rates}}%
\label{app_sec:definitions}%

The definitions can be found in \cite[Lemma~5.11]{SED_Selen2015}, but we summarize them here. We denote $\al = \rho^{1 + s}$.

The functions $\ipos{\cdot}$ and $\ineg{\cdot}$ are vectors of size $s$ and have their first element equal to 1, i.e.~$\ipos{\al,\be,0} = \ineg{\al,\be,0} = 1$. Furthermore, the vectors satisfy
\begin{equation}%
\frac{\ipos{\al,\be,r}}{\ipos{\al,\be,0}} = \Bigl( \frac{\al \be (1 + s)(\rho + 1) - \be^2 (1 + s) \rho - \al^2}{\al \be s} \Bigr)^r, \quad r = 0,1,\ldots,s - 1 \label{eqn:inner_pos_eigenvector}
\end{equation}%
and
\begin{equation}%
\frac{\ineg{\al,\be,r}}{\ineg{\al,\be,0}} =  \frac{\Func{\al,\be,\func{-}{\be}} \func{+}{\be}^r - \Func{\al,\be,\func{+}{\be}} \func{-}{\be}^r}{\Func{\al,\be,\func{-}{\be}} - \Func{\al,\be,\func{+}{\be}}}, \quad r = 0,1,\ldots,s - 1
\end{equation}%
with
\begin{equation}%
\func{\pm}{\be} = \frac{(1 + s)(\rho + 1) - \be \pm \sqrt{(\be - (1 + s)(\rho + 1))^2 - 4s(1 + s)\rho}}{2s}
\end{equation}%
and
\begin{equation}%
\Func{\al,\be,\funcparam} = \be - (1 + s)(\rho + 1) + s \funcparam + \frac{\be}{\al}(1 + s)\rho \funcparam^{s-1} = (1 + s) \rho \funcparam^{-1} \bigl( \frac{\be}{\al} \funcparam^s - 1 \bigr).
\end{equation}%

The variables $\be_1,\be_2,\ldots,\be_s$ are the $s$ roots that satisfy $|\be_i| < |\al|, ~ i = 1,2,\ldots,s$ of the equation
\begin{equation}%
\bigl( \al\be(1 + s)(\rho + 1) - \be^2(1 + s)\rho - \al^2 \bigr)^s - \be (\al\be s)^s = 0, \label{eqn:determinant_inner_pos}
\end{equation}%
and $\be_{s + 1}$ with $|\be_{s + 1}| < |\al|$ is the single root of
\begin{equation}%
\al^2 s^s + \be^2 ((1 + s)\rho)^s - \al \be s^s ( \func{+}{\be}^s + \func{-}{\be}^s ) = 0. \label{eqn:determinant_inner_neg}
\end{equation}%

The vector $\hb = (\h{0},\h{1},\ldots,\h{s - 1})$ is given by
\begin{equation}%
\hb = \al \bigl( \frac{1}{2}(1 + s)\rho \M{0,s - 1} + \al \I \bigr)^{-1} \sum_{i = 1}^s \ch_i \ibpos{\al,\be_i},
\end{equation}%
where the coefficients $\ch_1,\ch_2,\ldots,\ch_s$ satisfy
\begin{align}%
&\sum_{i = 1}^s  \ch_i \Bigl( \be_i \bigl( (1 + s)\rho \I + \al s\M{s - 1,0} \bigr) \notag \\
&\hspace{1.2cm} + \al^2 \bigl( -(1 + s)(\rho + 1)\I + s\Lo^T + (1 + s)\rho \Lo \bigr) \bigl( \frac{1}{2}(1 + s)\rho \M{0,s - 1} + \al \I \bigr)^{-1} \Bigr) \ibpos{\al,\be_i} \notag \\
&= - \be_{s + 1} \bigl( (1 + s)\rho\M{0,s - 1} + \al \I \bigr) \ibneg{\al,\be_{s + 1}},
\end{align}%
where $\I$ is the $s \times s$ identity matrix, $\M{x,y}$ is the $s \times s$ binary matrix with element $(x,y)$ equal to one and zeros elsewhere, and $\Lo$ is an $s \times s$ subdiagonal matrix with elements $(x,x - 1), ~ x = 1,2,\ldots,s - 1$ equal to one and zeros elsewhere. For consistency with indexing of all vectors, the indexing of a matrix starts at 0.


\section{Proof of Theorem~\ref{thm:limiting_conditional_arrival_rates}}%
\label{app_sec:proof_limiting_conditional_arrival_rates}%

The proof is based on the exact results of the related $M/M(1,s)/2/SED/FCFS$ system, with $s \in \Nat$, presented in \cite{SED_Selen2015}. Although we obtain similar results for the limiting conditional arrival rates for server 1 as in \cite{JSQ_WebServerFarms_Gupta2007}, we use here a completely different approach in deriving the limits.

In \cite{SED_Selen2015}, the state space $\{ (q_1,q_2) \mid (q_1,q_2) \in \Nat_0^2\}$ of the Markov process is transformed to the state space $\{ (m,n,r) \mid m \in \Nat_0, ~ n \in \Int, ~ r = 0,1,\ldots,s - 1 \}$ where $m = \min(q_1,\lfloor \frac{q_2}{s} \rfloor)$, $n = \lfloor \frac{q_2}{s} \rfloor - q_1$ and $r = \modu{q_2,s}$. Let us denote the equilibrium probabilities for the three-dimensional state space as $p(m,n,r)$. The equilibrium probability $p(m,n,r)$ has a series expression, i.e.~$p(m,n,r) = \sum_{l = 0}^\infty x(l,m,n,r)$, namely, for $m \ge 0, ~ n \ge 1$,
\begin{align}%
p(m,n,r) &= C \sum_{l = 0}^\infty \sum_{i = 1}^{(s + 1)^l} \sum_{j = 1}^s \be_{l,d(i) + j}^n \bigl( \ch_{l,d(i) + j} \al_{l,i}^m  \notag \\
&\hspace{4.5cm}+ \cv_{l + 1,d(i) + j} \al_{l + 1,d(i) + j}^m \bigr) \ipos{\al_{l,i},\be_{l,d(i) + j},r}. \label{eqn:equilibrium_distribution_3d_pos}
\intertext{For $m \ge 0$,}
p(m,0,r) &= C \sum_{l = 0}^\infty \sum_{i = 1}^{(s + 1)^l} \al_{l,i}^m h_{l,i}(r). \label{eqn:equilibrium_distribution_3d_hor}
\intertext{For $m \ge 0, ~ n \le -1$,}
p(m,n,r) &= C \sum_{l = 0}^\infty \sum_{i = 1}^{(s + 1)^l} \ch_{l,i(s + 1)} \al_{l,i}^m \be_{l,i(s + 1)}^{-n} \ineg{\al_{l,i},\be_{l,i(s + 1)},r} \notag \\
&\quad + C \sum_{l = 0}^\infty \sum_{i = 1}^{(s + 1)^l} \cv_{l+1,i(s + 1)} \al_{l + 1,i(s + 1)}^m \be_{l,i(s + 1)}^{-n} \ineg{\al_{l + 1,i(s + 1)},\be_{l,i(s + 1)},r}. \label{eqn:equilibrium_distribution_3d_neg}
\end{align}%
For the exact interpretation of each variable we refer the reader to \cite{SED_Selen2015}. In \cite{SED_Selen2015} the authors establish the following properties:
\begin{enumerate}[label = \arabic*.]%
\item There exists a positive integer $N$ such that the series in \eqref{eqn:equilibrium_distribution_3d_pos} and \eqref{eqn:equilibrium_distribution_3d_neg} converge absolutely for all $m \ge 0, ~ |n| \ge 1$ with $m + |n| > N$ and the series \eqref{eqn:equilibrium_distribution_3d_hor} converges absolutely for all $m \ge N$.
\item For $m + |n| > N$, we have $|x(l,m,n,r)| < u(l)$ and $\sum_{l = 0}^\infty u(l) < \infty$.
\item The series $\sum_{m + |n| > N} p(m,n,r), ~ r = 0,1,\ldots,s - 1$ converges absolutely.
\item $|\al_{l,i}| > |\be_{l,d(i) + j}|$ and $|\be_{l,i}| > |\al_{l + 1,i}|$ with $\al_{0,1} = \rho^{1 + s} < 1$.
\end{enumerate}%
In this proof we make use of the dominated convergence theorem for complex-valued functions.

\subsection{Server 1}%
\label{app_subsec:limiting_conditional_arrival_rate_server_1}%

The limiting conditional arrival rate to server 1 can be determined from
\begin{equation}%
\lim_{n \to \infty} \la_1(n) = \lim_{n \to \infty} \frac{\pi_1(n + 1)/\al_{0,1}^{n + 1}}{\pi_1(n)/\al_{0,1}^{n}} \al_{0,1}. \label{eqn:app_limiting_conditional_arrival_rate_server_1}
\end{equation}%
The marginal distribution for server 1 is given by, where $m = \lfloor \frac{q_2}{s} \rfloor$ and $r = \modu{q_2,s}$,
\begin{align}%
\pi_1(n) &= \sum_{m = 0}^\infty \sum_{r = 0}^{s - 1} p(\min(n,m),m - n,r) \notag \\
&= \sum_{m = 0}^{n - 1} \sum_{r = 0}^{s - 1} p(m,m - n,r) + \sum_{r = 0}^{s - 1} p(n,0,r) + \sum_{m = 1}^\infty \sum_{r = 0}^{s - 1} p(n,m,r). \label{eqn:app_marginal_server_1}
\end{align}%
Furthermore,
\begin{align}%
\lim_{n \to \infty} \frac{\pi_1(n)}{\al_{0,1}^{n}} &= \lim_{n \to \infty} \sum_{m = 0}^{n - 1} \sum_{r = 0}^{s - 1} \frac{p(m,m - n,r)}{\al_{0,1}^{n}} \notag \\
&\quad + \sum_{r = 0}^{s - 1} \lim_{n \to \infty} \frac{p(n,0,r)}{\al_{0,1}^{n}} + \sum_{m = 1}^\infty \sum_{r = 0}^{s - 1} \lim_{n \to \infty} \frac{p(n,m,r)}{\al_{0,1}^{n}}, \label{eqn:app_marginal_server_1_scaled}
\end{align}%
where the interchange of the limit and the series for the third term on the right-hand side of \eqref{eqn:app_marginal_server_1_scaled} is allowed by the dominated convergence theorem, because one can bound $p(n,m,r)$ from above by $p(0,m,r)$ and $\sum_{m = 0}^\infty p(0,m,r) < \infty$ since it is a subseries of $\sum_{m + |n| > N} p(m,n,r)$, which converges absolutely by property 3. Furthermore, we know that $\lim_{m \to \infty} p(m,n,r) = \lim_{m \to \infty} \sum_{l = 0}^\infty x(l,m,n,r)$ which is equal to $\sum_{l = 0}^\infty \lim_{m \to \infty} x(l,m,n,r)$ by the dominated convergence theorem for complex-valued functions in combination with property 2. This allows us to compute the second and third term on the right-hand side of \eqref{eqn:app_marginal_server_1_scaled}. The first term on the right-hand side of \eqref{eqn:app_marginal_server_1_scaled} can be determined as follows
\begin{align}%
&\lim_{n \to \infty} \sum_{m = 0}^{n - 1} \sum_{r = 0}^{s - 1} \frac{p(m,m - n,r)}{\al_{0,1}^{n}} \notag \\
&= C \Bigl(\lim_{n \to \infty} \sum_{l = 0}^\infty \sum_{i = 1}^{(s + 1)^l} \ch_{l,i(s + 1)} \frac{\left( \frac{\al_{l,i}}{\al_{0,1}} \right)^{n} - \left( \frac{\be_{l,i(s + 1)}}{\al_{0,1}} \right)^{n}}{\frac{\al_{l,i}}{\be_{l,i(s + 1)}} - 1} \sum_{r = 0}^{s - 1} \ineg{\al_{l,i},\be_{l,i(s + 1)},r} \notag \\
&\quad+ \lim_{n \to \infty} \sum_{l = 0}^\infty \sum_{i = 1}^{(s + 1)^l} \cv_{l + 1,i(s + 1)} \frac{\left( \frac{\be_{l,i(s + 1)}}{\al_{0,1}} \right)^{n} - \left( \frac{\al_{l + 1,i(s + 1)}}{\al_{0,1}} \right)^{n}}{1 - \frac{\al_{l + 1,i(s + 1)}}{\be_{l,i(s + 1)}}} \sum_{r = 0}^{s - 1}\ineg{\al_{l + 1,i(s + 1)},\be_{l,i(s + 1)},r} \Bigr) \notag \\
&= C \frac{\ch_{0,s + 1}}{\frac{\al_{0,1}}{\be_{0,s + 1}} - 1} \sum_{r = 0}^{s - 1} \ineg{\al_{0,1},\be_{0,s + 1},r}.
\end{align}%
Interchange of the limit and series is again allowed here since one can bound the absolute value of the summands from above by $v(l)$ for sufficiently large $n$ and $\sum_{l = 0}^\infty v(l) < \infty$. One can finally establish that
\begin{align}%
\lim_{n \to \infty} \frac{\pi_1(n)}{\al_{0,1}^{n}} = C \Bigl(& \frac{\ch_{0,s + 1}}{\frac{\al_{0,1}}{\be_{0,s + 1}} - 1} \sum_{r = 0}^{s - 1} \ineg{\al_{0,1},\be_{0,s + 1},r} \notag \\
&+ \sum_{r = 0}^{s - 1} h_{0,1}(r) + \sum_{j = 1}^s \frac{\ch_{0,j} \be_{0,j}}{1 - \be_{0,j}} \sum_{r = 0}^{s - 1} \ipos{\al_{0,1},\be_{0,j},r} \Bigr). \label{eqn:app_marginal_server_1_scaled_computed}
\end{align}%
Thus, by \eqref{eqn:app_limiting_conditional_arrival_rate_server_1} and \eqref{eqn:app_marginal_server_1_scaled_computed}, $\lim_{n \to \infty} \la_1(n) = \al_{0,1} = \rho^{1 + s}$.

\subsection{Server 2}%
\label{app_subsec:limiting_conditional_arrival_rate_server_2}%

The limiting conditional arrival rate to server 2 can be determined from
\begin{equation}%
\lim_{n \to \infty} \la_2(sn + r) = \begin{cases}%
\lim_{n \to \infty} s \frac{\pi_2(sn + r + 1)/\al_{0,1}^{n}}{\pi_2(sn + r)/\al_{0,1}^{n}}, & r = 0,1,\ldots,s - 2, \\
\lim_{n \to \infty} s \frac{\pi_2(sn + r + 1)/\al_{0,1}^{n + 1}}{\pi_2(sn + r)/\al_{0,1}^{n}} \al_{0,1}, & r = s - 1.
\end{cases}%
\label{eqn:app_limiting_conditional_arrival_rate_server_2}
\end{equation}%
The marginal distribution for server 2 is given by, for $r = 0,1,\ldots,s - 1$,
\begin{align}%
\pi_2(sn + r) &= \sum_{q_1 = 0}^\infty p(\min(q_1,\lfloor \frac{sn + r}{s} \rfloor),\lfloor \frac{sn + r}{s} \rfloor - q_1,\modu{sn + r,s}) \notag \\
&= \sum_{q_1 = 0}^\infty p(\min(q_1,n),n - q_1,r) \notag \\
&= \sum_{q_1 = 0}^{n - 1} p(q_1,n - q_1,r) + p(n,0,r) + \sum_{q_1 = 1}^\infty p(n,-q_1,r). \label{eqn:app_marginal_server_2}
\end{align}%
For $\pi_2(sn + r + 1), ~ r = s - 1$ we should replace $n$ by $n + 1$ and $r$ by 0 in \eqref{eqn:app_marginal_server_2}. Furthermore, for $r = 0,1,\ldots,s - 1$,
\begin{equation}%
\lim_{n \to \infty} \frac{\pi_2(sn + r)}{\al_{0,1}^{n}} = \lim_{n \to \infty} \sum_{q_1 = 0}^{n - 1} \frac{p(q_1,n - q_1,r)}{\al_{0,1}^n} + \lim_{n \to \infty} \frac{p(n,0,r)}{\al_{0,1}^n} + \lim_{n \to \infty} \sum_{q_1 = 1}^\infty \frac{p(n,- q_1,r)}{\al_{0,1}^n}. \label{eqn:app_marginal_server_2_scaled}
\end{equation}%
Using identical arguments as for the limiting conditional arrival rate for server 1, we establish
\begin{equation}%
\lim_{n \to \infty} \frac{\pi_2(sn + r)}{\al_{0,1}^{n}} = A(r), \quad r = 0,1,\ldots,s - 1, \label{eqn:app_marginal_server_2_scaled_computed}
\end{equation}%
where $A(r)$ is given in \eqref{eqn:A(r)}. Finally, combining \eqref{eqn:app_limiting_conditional_arrival_rate_server_2} and \eqref{eqn:app_marginal_server_2_scaled_computed} proves \eqref{eqn:limiting_conditional_arrival_rate_server_2}.


\section{Proof of Proposition~\ref{prop:conditional_arrival_rates_limiting_regimes}, point 2}%
\label{app_sec:proof_limiting_regimes}%

By letting $\rho \downarrow 0$ in \eqref{eqn:conditional_arrival_rate_server_1_n_0}-\eqref{eqn:conditional_arrival_rate_server_1_n_2} and $\la_1(n) \approx \rho^{1 + s}, ~ n \ge 3$ we immediately find that $\la_1(n) \downarrow 0, ~ n \in \Nat_0$.

We note that in \eqref{eqn:conditional_arrival_rate_server_2_n_s-1_until_2s-1} the factors on the right-hand side in front of $\la_2^\textup{lim}(n-s)$ go to a constant for $\rho \downarrow 0$. So, what remains is that we establish that $\lim_{\rho \downarrow 0} \la_2^\textup{lim}(r) = 0, ~ r = 0,1,\ldots,s-1$. This part of the proof relies heavily on the asymptotic results of \cite{SED_Selen2015}. We denote $\al = \rho^{1 + s}$ and investigate for $r = 0,1,\ldots,s - 1$,
\begin{equation}%
\frac{A(\al,r)}{\al^{r/s}} = \sum_{i = 1}^s \ch_i \frac{\be_i/\al}{1 - \be_i/\al} \unit_i \bigl( \frac{\be_i}{\al} \bigr)^{r/s} + \al^{1/s} \frac{h(r)}{\al^{(r + 1)/s}} + \al^{1 - r/s} \frac{\be_{s + 1}/\al}{1 - \be_{s + 1}} \ineg{\al,\be_{s + 1},r},
\end{equation}%
where we used that $\ipos{\al,\be_i,r} = \unit_i \be_i^{r/s}$ with $\unit_i$ the $i$-th unit root of $\unit^s = 1$, which is established in \cite[Lemma~5.6]{SED_Selen2015}. Now,
\begin{equation}%
\lim_{\al \downarrow 0} \frac{A(\al,r)}{\al^{r/s}} = c(r), \label{eqn:app_limiting_scaled_A(r)}
\end{equation}%
where $c(r)$ is some constant. In the following we denote $c_i$ as some constant that can be a function of $r$. Equation \eqref{eqn:app_limiting_scaled_A(r)} follows from the fact that for $\al \downarrow 0$ we have that $\be_i/\al \to c_1 < 1, ~ i = 1,2,\ldots,s$, $\be_{s + 1}/\al \to c_2$ \cite[Lemma~5.15(i)(a) and (i)(c)]{SED_Selen2015}; $h(r)/\al^{(r + 1)/s} \to c_3(r)$ \cite[Appendix~B, part (c)]{SED_Selen2015}; $\ineg{\al,\be_{s + 1},r} \to c_4(r)$ \cite[Lemma~5.15(i)(d)]{SED_Selen2015}; $\be_{s + 1} \to 0$ \cite[Corollary~5.14]{SED_Selen2015}; and $\sum_{i = 1}^s \ch_i \unit_i \to c_5$ ($\al \downarrow 0$ in (5.46) of \cite{SED_Selen2015}).

Finally, for $r = 0,1,\ldots,s - 2$,
\begin{equation}%
\lim_{\al \downarrow 0} \frac{A(\al,r + 1)}{A(\al,r)} = \lim_{\al \downarrow 0} \frac{\al^{(r + 1)/s}}{\al^{r/s}} \frac{A(\al,r + 1)/\al^{(r + 1)/s}}{A(\al,r)/\al^{r/s}}
= \lim_{\al \downarrow 0} \al^{1/s} \frac{c(r + 1)}{c(r)} = 0
\end{equation}%
and for $r = s - 1$,
\begin{equation}%
\lim_{\al \downarrow 0} \al \frac{A(\al,0)}{A(\al,s - 1)} = \lim_{\al \downarrow 0} \frac{\al}{\al^{(s - 1)/s}} \frac{A(\al,0)}{A(\al,r)/\al^{(s - 1)/s}} = \lim_{\al \downarrow 0} \al^{1/s} \frac{c(0)}{c(s - 1)} = 0.
\end{equation}%
This concludes the proof.


\bibliographystyle{plain}%
\bibliography{BibliographyServerFarmsGJSQNew}%

\end{document}